\newtheorem{thm}{Theorem}
\newtheorem{lemma}{Lemma}
\begin{document}

\title{Space of initial conditions for a cubic Hamiltonian system}

\author{Thomas Kecker}
\date{}

\maketitle

\begin{abstract}
\noindent
In this paper we perform the analysis that leads to the space of initial conditions for the Hamiltonian system $q' = p^2 + zq + \alpha$, $p' = -q^2 - zp - \beta$, studied by the author in a previous article \cite{kecker1}. By compactifying the phase space of the system from $\mathbb{C}^2$ to $\mathbb{CP}^2$ three base points arise in the standard coordinate charts covering the complex projective space. Each of these is removed by a sequence of three blow-ups, a construction to regularise the system at these points. The resulting space, where the exceptional curves introduced after the first and second blow-up are removed, is the so-called Okamoto's space of initial conditions for this system which, at every point, defines a regular initial value problem in some coordinate chart of the space. The solutions in these coordinates will be compared to the solutions in the original variables.
\end{abstract}

\section{Introduction}
When studying the solutions of a differential equation in the complex plane a natural question to ask is what types of singularities can occur by analytic continuation of a local analytic solution, which exists by Cauchy's local existence and uniqueness theorem around every point where the equation is defined as a regular initial value problem. Points where the equation is itself singular are called fixed singularities of the equation. All other singularities of solutions, which arise somewhat spontaneously, i.e.\ they cannot be read off from the equation itself, are called movable singularities, as in fact their position varies with the initial conditions prescribed for the equation. Some differential equations are very special in this respect as the only movable singularities that can occur by analytic continuation of a local analytic solution are poles: an equation of this type is said to have the Painlev\'e property. In the class of second-order ordinary differential equations the six Painlev\'e equations stand out as nonlinear equations with this property. The six Painlev\'e equations are well-studied by many authors and have a rich mathematical structure of families of rational and special function solutions, B\"acklund transformations, and relations to integrable systems and isomonodromic deformation problems of associated linear systems. Another feature of the Painlev\'e equations was studied by Okamoto \cite{okamoto}: the so-called space of initial conditions. This space is constructed to deal with the equations at the points at infinity of the phase space, in particular at the movable singularities of the solutions. It is obtained by extending the phase space of the variables $(y,y')$ of the system to a compact space including the points where $y$ or $y'$ (or both) become infinite, e.g. $\mathbb{CP}^2$. Okamoto \cite{okamoto} constructed the space of initial conditions for all six Painlev\'e equations. Starting from so-called Hirzebruch surfaces (certain compact rational surfaces), each equation re-written in local coordinates on the surface can be regularised by a sequence of $8$ {\it blow-ups} (one needs $9$ blow-ups if starting from $\mathbb{CP}^2$). He showed that at every point of the resulting space, the equation written in some local coordinate chart in a neighbourhood of this point, forms a regular initial value problem. 

All six Painlev\'e equations can be written in the form of equivalent Hamiltonian systems. Sakai \cite{sakai} has studied the spaces of initial conditions for all six Painlev\'e equations in Hamiltonian form from a geometric viewpoint based on the symmetries of the underlying rational surfaces. The space of initial conditions for the Painlev\'e equations in Hamiltonian form was also studied in \cite{matano}. Joshi et.\ al.\ study the asymptotic behaviour of the solutions of the first Painlev\'e equation \cite{joshi1}, second Painlev\'e equation \cite{joshi2} and fourth Painlev\'e equation \cite{joshi3} via the space of initial conditions in the limit when the independent variable goes to infinity.

In this article we construct the space of initial conditions for the cubic Hamiltonian system (\ref{hamsys}) below, which was previously studied in \cite{kecker1} and is fact related to the Painlev\'e IV equation, but different to the standard Hamiltonian system considered e.g. in \cite{sakai} or \cite{matano}. Although the space of initial conditions is isomorphic to the one obtained there, the blow-up calculations in local coordinates are somewhat easier because of a $3$-fold symmetry in the singularity structure of the solutions: all singularities are simple poles with residues given by third roots of unity $1,\omega,\bar{\omega}$, where $\omega = \frac{-1+i\sqrt{3}}{2}$. After compactifying the phase space of the system to $\mathbb{CP}^2$ one finds three points where the system is indeterminate, the so-called {\it base points}, each of which can be resolved by a sequence of three blow-ups. Thus, the total number of blow-ups needed to regularise the system is also $9$ as in the case of Okamoto. However, the calculations to resolve each of these three base points are essentially the same up to factors of $\omega$ and $\bar{\omega}$. Each blow-up extends the phase space by introducing an additional line, called the exceptional curve. We show that the exceptional curves arising from the first two blow-ups are repellors of the dynamic system, meaning that the only singularities that can arise occur when the solution passes through the exceptional curve after the third blow-up. On this line the system of equations forms a regular initial value problem, which is a manifestation of the Painlev\'e property of this system. Thus the space of initial conditions, formed by the extended compact phase space after the three blow-ups for each base point, with the exceptional curves after the first and second blow-up removed, has the property that at each point there exist local coordinates such that the system of equations has a local analytic solution. We will see how the solutions of the system after the last blow-up are related to the solutions in the original variables.

The regular systems obtained after the third blow-up of each base point were also used in \cite{kecker1} for an alternative proof of the Painlev\'e property for the Hamiltonian system (\ref{hamsys}). The solutions of system (\ref{hamsys}) were studied further by Steinmetz \cite{steinmetz} by the re-scaling method, in particular their asymptotic behaviour in sectors of the complex plane and the distribution of poles.

\section{A cubic Hamiltonian system}
We consider the following Hamiltonian system, introduced in \cite{kecker1}, with Hamiltonian cubic in $p$ and $q$,
\begin{equation*}
H(z,p,q) = \frac{1}{3} \left( p^3 + q^3 \right) + zpq + \alpha p + \beta q,
\end{equation*}
the Hamiltonian equations being
\begin{equation}
\label{hamsys}
q' = \frac{\partial H}{\partial p} = p^2 + z q + \alpha, \quad p' = -\frac{\partial H}{\partial q} = -q^2 - z p - \beta.
\end{equation}
This system is related to the fourth Painlev\'e equation in the following way. Namely, the combination $w = p + q - z$ satisfies the equation
\begin{equation}
\label{p4scal}
2ww'' = w'^2 - w^4 -4zw^3 -(2\alpha+2\beta+3z^2)w^2 - (1-\alpha+\beta)^2,
\end{equation}
which becomes $P_{I\!V}$ after a simple rescaling of variables. Furthermore, the combinations $w = \rho p + \bar{\rho}q - z$, $\rho \in \{\omega,\bar{\omega}\}$, satisfy the same equation (\ref{p4scal}) but with the parameters $\alpha$ and $\beta$ replaced by $\rho \alpha$, $\bar{\rho} \beta$. Hence, by linear combination of solutions of equation (\ref{p4scal}) with different parameters, the solutions of system (\ref{hamsys}) can be expressend completely in terms of the fourth Painlev\'e transcendents. 

Although it is already granted by the connection with the Painlev\'e IV equation that the system (\ref{hamsys}) has the Painlev\'e property, in \cite{kecker1} an alternative proof of this statement was given. At any singularity $z_\ast$ of system (\ref{hamsys}), its solution is represented by a Laurent series, convergent in a punctured neighbourhood of $z_\ast$, of the form
\begin{equation}
\label{poleexpansion}
\begin{aligned}
q(z) =& \frac{-\rho}{z-z_\ast} + \frac{\rho z_\ast}{2} + \left( \rho \left(1+\frac{z_\ast^2}{4} \right) - \frac{\alpha}{3} + \frac{2}{3} \bar{\rho} \beta \right) (z-z_\ast) + h (z-z_\ast)^2 + \sum_{n=3}^\infty c_n (z-z_\ast)^n \\
p(z) =& \frac{\bar{\rho}}{z-z_\ast} + \frac{\bar{\rho} z_\ast}{2} + \left( \bar{\rho} \left( 1 - \frac{z_\ast^2}{4} \right) - \frac{2}{3} \rho \alpha + \frac{\beta}{3} \right) (z-z_\ast)+ k(z-z_\ast)^2 + \sum_{n=3}^\infty d_n (z-z_\ast)^n,
\end{aligned}
\end{equation}
$\rho \in \{1,\omega,\bar{\omega} \}$, having simple poles with residues given in terms of the third roots of unity. Here, $h$ and $k$ are complex parameters, coupled by the linear relation
\begin{equation*}
\rho h - k = \left( \frac{5}{4} \bar{\rho} - \frac{\alpha}{2} \rho + \frac{\beta}{2} \right) z_\ast,
\end{equation*}
so there is essentially only one free parameter. Fixing this parameter is similar to prescribing initial conditions for the system of equations, and we will see how this is done at the end of this article. 

The $3$-fold structure of the solutions of the system will be important in the following: it will allow us to construct the space of initial conditions for the system in a symmetric manner. The proof in \cite{kecker1} of the Painlev\'e property of system (\ref{hamsys}) relies on the fact that at any simple pole of the system (\ref{hamsys}) the function
\begin{equation*}
W(z) = H(z,p(z),q(z)) + \frac{p(z)^2}{q(z)}
\end{equation*}
remains finite. This in turn relies on the fact that $W$ satisfies the first-order linear differential equation
\begin{equation*}
W' + 3 \frac{p}{q^2} W = \beta \frac{p}{q} + 2 \alpha \left( \frac{p}{q} \right)^2 + 3 \left( \frac{p}{q} \right)^3,
\end{equation*}
and Lemma 6 in \cite{kecker2}, showing that the coefficients in this equation, i.e. $\frac{p}{q^2}$ and powers of $\frac{p}{q}$, remain bounded at a singularity. The auxiliary function $W$ will also play an important role below when we are showing that certain points at infinity in the space of initial conditions cannot be reached by analytic continuation of a solution. Due to the nature of the blow-up computations the expressions we are dealing with become somewhat long and we have used \textsc{Mathematica} to perform the symbolic calculations.

\section{Constructing the space of initial conditions}
At the movable singularities of a solution, the system of equations itself is well-defined and nothing can be said in general about the nature of the solution in a neighbourhood of this point just from the structure of the equation. To obtain some information on how the solution behaves in the vicinity of a movable singularity one has to include the points at infinity of the phase space of the system as the solution will be unbounded in this space. Thus the first step in constructing the space of initial conditions is to extend the system of equations in the variables $(q,p) \in \mathbb{C}^2$ to a compact space which includes the points where either $p$ or $q$ (or both) are infinite. In general any rational surface can serve as compactification but in the following we compactify the phase space of the Hamiltonian system to $\mathbb{CP}^2$. To this end we express the system of equations in the three standard coordinate charts of complex projective space, denoted by $(p,q)$, $(u_1,u_2)$ and $(v_1,v_2)$, where
\begin{equation*}
[1:q:p] = [u_1:1:u_2] = [v_1:v_2:1],
\end{equation*}
which together cover $\mathbb{CP}^2$. The sets of points $u_1 = 0$ in the variable $(u_1,u_2)$ and $v_1 = 0$ in the variables $(v_1,v_2)$ represent the line at infinity of $\mathbb{CP}^2$, denoted by $L$ in the following. In these two coordinate charts, the system of equations becomes
\begin{equation}
\label{usystem}
u_1' = -\alpha u_1^2 - z u_1 - u_2^2, \quad u_2' = -\beta u_1 - \gamma u_1 u_2 - 2 z u_2 - \frac{u_2^2+1}{u_1}
\end{equation}
and, respectively,
\begin{equation}
\label{vsystem}
v_1' = \beta v_1^2 + z v_1 + v_2^2, \quad v_2' = \alpha v_1 + \beta v_1 v_2 + 2 z v_2 + \frac{v_2^3+1}{v_1}.
\end{equation}
We see that at the points $(u_1,u_2), (v_1,v_2) \in \{(0,-1), (0,-\omega), (0,-\bar{\omega}) \}$ the right hand side of the second equation in (\ref{usystem}) or (\ref{vsystem}) becomes indeterminate, i.e.\ is of the form $\frac{0}{0}$ and nothing can be said about the behaviour of the solutions. These are the {\it base points} of the system extended on $\mathbb{CP}^2$. However, the pairs of coordinates $(u_1,u_2) = (v_1,v_2) = (0,-1)$, $(u_1,u_2) = (0,-\omega), (v_1,v_2) = (0,-\bar{\omega})$ and $(u_1,u_2) = (0,-\bar{\omega})$, $(v_1,v_2) = (0,-\omega)$ each describe the same point in $\mathbb{CP}^2$, so there are only three base points for our system. The following Lemma shows that, apart from these three base points, the line at infinity cannot be reached by analytic continuation of a solution in the complex plane.
\begin{lemma}
\label{lemL}
Let $\Gamma$ be a rectifiable path in the complex plane with endpoint $z_\ast$ such that analytic continuation of a solution of the system in the variables $(u_1,u_2)$ or $(v_1,v_2)$ along $\Gamma$ leads to a point $P \in L$, represented by the coordinates $(u_1,u_2) = (0,c)$ or $(v_1,v_2) = (0,c)$, $c \in \mathbb{C}$, respectively. Then we have $c \in \{-1,-\omega,-\bar{\omega}\}$.
\end{lemma}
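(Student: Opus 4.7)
The plan is to argue in the $(u_1,u_2)$-chart by contradiction; the $(v_1,v_2)$-case is identical in structure, since the singular term $(v_2^3+1)/v_1$ in (\ref{vsystem}) has its zeros precisely at the three admissible values. (A direct re-derivation of the second equation of (\ref{usystem}) from $q = 1/u_1$, $p = u_2/u_1$ shows that its singular term is $-(u_2^3+1)/u_1$, exhibiting the same cubic structure, with zeros exactly at $-1,-\omega,-\bar{\omega}$.) Assume then that along $\Gamma$ one has $u_1(z)\to 0$ and $u_2(z)\to c$ with $c^3+1 \neq 0$. Dividing the second equation of (\ref{usystem}) by $u_2^3+1$ gives
\begin{equation*}
\frac{u_2'(z)}{u_2(z)^3+1} = -\frac{1}{u_1(z)} + R(z),
\end{equation*}
with $R$ bounded near $(0,c)$. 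Since $\Phi(u_2) := \int du_2/(u_2^3+1)$ is analytic at $u_2=c$, the left-hand side integrates along $\Gamma$ to a finite limit as $z\to z_\ast$; hence $\int_{z_0}^{z}ds/u_1(s)$ must also stay bounded. The contradiction will come from showing that $u_1$ vanishes at $z_\ast$ so quickly that $1/u_1$ fails to be integrable along $\Gamma$.

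If $c \neq 0$, the first equation of (\ref{usystem}) gives $u_1'(z) \to -c^2 \neq 0$, so $u_1$ vanishes to exact first order along $\Gamma$ and the integral diverges logarithmically, which is the easy case. The delicate case is $c = 0$, where $u_1'\to 0$ and the linear-vanishing bound fails. Here I would track the product $\xi := u_1 u_2$: combining both equations of (\ref{usystem}) yields
\begin{equation*}
\xi' = -(2u_2^3 + 1) - 3z\xi - 2\alpha u_1 \xi - \beta u_1^2,
\end{equation*}
so $\xi'(z)\to -1$, hence $\xi \sim -(z-z_\ast)$ along $\Gamma$. Feeding this back into the $u_2$-equation via $-1/u_1 = -u_2/\xi \sim u_2/(z-z_\ast)$ gives $u_2 \sim C(z-z_\ast)$ for some $C \neq 0$, and then $u_1 = \xi/u_2 \sim -1/C$, contradicting $u_1 \to 0$.

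The main obstacle is exactly this $c=0$ subcase: both differential equations degenerate simultaneously at $z_\ast$, so no single equation controls the approach rate of $u_1$; the product $\xi$ is the right quantity to watch because it detects the constant term $-1$ coming from $u_2^3+1$ at $u_2=0$. Making the asymptotic equivalences rigorous along the rectifiable (but a priori non-smooth) path $\Gamma$ is routine: each error term is uniformly bounded in a neighbourhood of $(0,c)$, and its arc-length integral is lower order compared with the dominant $(z-z_\ast)$-scale of the leading contributions.
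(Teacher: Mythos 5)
Your first step is sound: dividing the second equation of the $u$-chart system by $u_2^3+1$ (and you are right that the singular term in (\ref{usystem}) should read $-(u_2^3+1)/u_1$) gives $\tfrac{d}{dz}\Phi(u_2) = -1/u_1 + R$ with $R$ bounded near $(0,c)$, and since this only requires integrating a bounded function over a path of finite length, you correctly conclude that $\int_\Gamma ds/u_1$ tends to a finite limit. The gap is in the second half, where you need the opposite kind of statement: that $\int_\Gamma ds/u_1$ \emph{diverges}. Two problems arise there. First, from $u_1'\to -c^2$ (or $\xi'\to -1$) you only get $u_1(z)=c^2(z_\ast-z)+E(z)$ with $|E(z)|=o(\ell(z))$, where $\ell(z)$ is the arc length of the portion of $\Gamma$ remaining after $z$; for a merely rectifiable path $\ell(z)$ can be far larger than $|z_\ast-z|$, so the asymptotic $u_1\sim c^2(z_\ast-z)$ is not justified. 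Second, and more fundamentally, even granting that asymptotic, divergence of $\int|ds|/|u_1|$ does not imply divergence of the complex line integral $\int ds/u_1$: cancellation along a winding path is not excluded by your estimates, so no contradiction is obtained. The $c=0$ subcase compounds this: the passage from $u_2'\approx u_2/(z-z_\ast)$ to $u_2\sim C(z-z_\ast)$ with $C\neq 0$ is asserted rather than proved, and the possibility that $u_2/(z-z_\ast)$ has no limit is not addressed. The closing remark that making these equivalences rigorous is ``routine'' is exactly where the difficulty of the lemma lives.

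These difficulties are what the paper's proof is built to avoid. It introduces the auxiliary function $W=H+p^2/q$, which in the $u$-chart takes the form $W_{(u)}=\bigl(1+u_2^3+O(u_1)\bigr)/(3u_1^3)$ and is therefore infinite at any $(0,c)$ with $c^3\neq -1$, and shows that its \emph{logarithmic derivative} along the flow is bounded in a neighbourhood of such a point. Integrating a bounded quantity over a finite-length path bounds $\log W_{(u)}(z_\ast)$, contradicting $W_{(u)}(z_\ast)=\infty$; only upper bounds on integrals are ever needed. Your strategy can be repaired in the same spirit without importing $W$ wholesale: from the two equations one computes $\tfrac{d}{dz}\log u_1=-u_2^2/u_1+O(1)$ and $\tfrac{d}{dz}\log(1+u_2^3)=-3u_2^2/u_1+O(1)$ near $(0,c)$, so the derivative of $\log(1+u_2^3)-3\log u_1$ is bounded there; since $1+u_2^3\to 1+c^3\neq 0$, it follows that $\log u_1$ stays bounded along $\Gamma$, contradicting $u_1\to 0$. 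As it stands, however, the proposal has a genuine gap at the divergence step.
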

\begin{proof}
Let $W_{(u)}(z)$ and $W_{(v)}(z)$ denote the functions obtained by re-writing the auxiliary function $W(z)$ in the variables $(u_1,u_2)$ and $(v_1,v_2)$, respectively. We perform the analysis for $W_{(u)}$, the case for $W_{(v)}$ being similar,
\begin{equation*}
W_{(u)}(z) = \frac{1+3 \beta  u_1^2+3 z u_1 u_2+3 \alpha  u_1^2 u_2+3 u_1^2 u_2^2+u_2^3}{3 u_1^3}.
\end{equation*}
Note that for any point $P \in L$, not one of the three base points, the values of $W_{(u)}$ and $W_{(v)}$ are infinite. At the base points themselves $W_{(u)}$ and $W_{(v)}$ are of the indeterminate form $\frac{0}{0}$. Now consider the logarithmic derivative
\begin{equation*}
\frac{d}{d z} \log(W_{(u)}(z)) = \frac{W_{(u)}'(z)}{W_{(u)}(z)} = -\frac{3 u_1 \left(u_2+2 \beta  u_1^2 u_2+3 z u_1 u_2^2+\alpha  u_1^2 u_2^2+u_2^4\right)}{1+3 \beta  u_1^2+3 z u_1 u_2+3 \alpha u_1^2 u_2+3 u_1^2 u_2^2+u_2^3}.
\end{equation*}
Suppose that $P$ has coordinates $(0,c)$ where $c \notin \{-1,-\omega,-\bar{\omega}\}$. In some neighbourhood $U$ of $P$ the logarithmic derivative is bounded, say by some number $M$. Let $z_0 \in \Gamma$ denote a point on the curve such that $W_{(u)}(z_0) \neq 0$ and $(u_1(z),u_2(z)) \in U$ for all $z \in \Gamma_{z_0}$, the part of $\Gamma$ following $z_0$. Integrating along the path $\Gamma_{z_0}$ shows that 
\begin{equation*}
|\log(W_{(u)}(z_\ast))| = \Big| \log(W_{(u)}(z_0)) + \int_{z_0}^{z_\ast} \frac{d}{d z} \log(W_{(u)}(z)) dz \Big| \leq |\log(W_{(u)}(z_0))| + \int_{z_0}^{z_\ast} M |dz| < \infty.
\end{equation*}
On the other hand, since $(u_1(z_\ast),u_2(z_\ast)) = P$, we would have $W_{(u)}(z_\ast)$ infinite, contradicting the above. Hence we must have $P=(0,-1)$, $P=(0,-\omega)$ or $P=(0,-\bar{\omega})$. 
\end{proof}

We will now describe the procedure of blowing up the surface at the points $(u_1,u_2) = (0,-\rho)$, where $\rho \in \{1,\omega,\bar{\omega}\}$. The analysis for all three points is similar, differing only in various factors of $\omega$ and $\bar{\omega}$. The blow-up at a point $P=(p_1,p_2) \in \mathbb{C}^2$ in the coordinates $(u_1,u_2)$ is defined by the following construction,
\begin{equation*}
\text{Bl}_P = \{ ((u_1,u_2),[\zeta_1,\zeta_2]) \in \mathbb{C}^2 \times \mathbb{CP}^1 : (u_1-p_1) \zeta_2 = (u_2-p_2) \zeta_1 \},
\end{equation*}
where $[\zeta_1:\zeta_2]$ are homogeneous coordinates on $\mathbb{CP}^1$. Note that a blow-up is a local operation on the space when seen as follows. We define the projection $\pi: \text{Bl}_P \to \mathbb{C}^2$ by $((u_1,u_2),[\zeta_1,\zeta_2]) \mapsto (u_1,u_2)$. For any point $Q \neq P$, the pre-image $\pi^{-1}(Q)$ consists of a single point whereas the pre-image of $P$ itself is $\pi^{-1}(P) = P \times \mathbb{CP}^1$. This is called the exceptional curve in $\text{Bl}_P$. So we see that by a blow-up at $P$ the point itself becomes inflated to a sphere whereas away from $P$ the local geometry of the space does not change. In the coordinates $(u_1,u_2)$ the blow-up is performed by introducing two new coordinate charts, denoted by $(u_{1,1},u_{1,2})$ and $(u_{2,1},u_{2,2})$, respectively. The first new coordinate chart is given by
\begin{equation*}
u_{1,1} = \frac{\zeta_1}{\zeta_2} = \frac{u_1 - p_1}{u_2 - p_2}, \quad u_{1,2} = u_2 - p_2,
\end{equation*}
and covers the part of $\text{Bl}_P$ where $\zeta_2 \neq 0$, whereas the second chart is
\begin{equation*}
u_{2,1} = u_1 - p_1, \quad u_{2,2} = \frac{\zeta_2}{\zeta_1} = \frac{u_2 - p_2}{u_1 - p_1},
\end{equation*}
covering the part of $\text{Bl}_P$ where $\zeta_1 \neq 0$. The exceptional curves introduced by blowing up each base point will be denoted $L_1^{(\rho)}$, $\rho \in \{1,\omega,\bar{\omega}\}$, and are parametrised by $(u_{1,1},u_{1,2}) = (c,0)$ in the first chart and $(u_{2,1},u_{2,2}) = (0,c)$ in the second chart, $c \in \mathbb{C}$. When looking for new base points after performing the blow-up, we only need to look on the exceptional curve. Re-written in the variables after the blow-up, the system of equations in the first chart becomes
\begin{equation*}
\begin{aligned}
u_{1,1}' &= \frac{2\bar{\rho} - 2 \rho z u_{1,1}}{u_{1,2}} + (\beta - \rho \alpha)u_{1,1}^2 + z u_{1,1} - \rho, \\
u_{1,2}' &= (\rho \alpha-\beta) u_{1,1} u_{1,2} - \alpha u_{1,1} u_{1,2}^2 + 2z(\rho - u_{1,2}) - \frac{u_{1,2}^2 - 3\rho u_{1,2} + 3\bar{\rho}}{u_{1,1}},
\end{aligned}
\end{equation*}
where it is indeterminate at the point $(u_{1,1},u_{1,2}) = \left( \frac{\rho}{z},0 \right)$. In the second chart,
\begin{equation*}
\begin{aligned}
u_{2,1}' &= -\bar{\rho} - z u_{2,1} - \alpha u_{2,1}^2 + 2\rho u_{2,1} u_{2,2} - u_{2,1}^2 u_{2,2}^2 \\
u_{2,2}' &= \rho \alpha - \beta - zu_{2,2} + \rho u_{2,2}^2 + \frac{2\rho z - 2\bar{\rho} u_{2,2}}{u_{2,1}},
\end{aligned}
\end{equation*}
with indeterminacy at $(u_{2,1},u_{2,2}) = ( 0, \bar{\rho}z )$. Since $u_{1,1} = u_{2,2}^{-1}$ we see that the base points of these systems in fact correspond to the same point on the exceptional curve, so there is only one new base point. Also note that the location of the base point becomes $z$ dependent. We will now show that a solution cannot pass through the exceptional curve except at the base points.
\begin{lemma}
\label{lemL1}
Let $\Gamma$ be a rectifiable path in the complex plane with end point $z_\ast$ such that analytic continuation of a solution along $\Gamma$ leads to a point $P$ on one of the exceptional curves $L_1^{(\rho)}$, $\rho \in \{1, \omega, \bar{\omega} \}$. Let $P$ have coordinates $(u_{1,1},u_{1,2}) = (c^{-1},0)$ in the first chart and $(u_{2,1},u_{2,2}) = (0,c)$, $c \in \mathbb{C}$, in the second chart, respectively. Then we must have $c = \bar{\rho} z_\ast$.
\end{lemma}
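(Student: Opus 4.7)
The plan is to mimic the proof of Lemma~\ref{lemL} on the blown-up surface, this time using the auxiliary function $W_{(u)}$ expressed in the new coordinate chart. Denote by $W^{(\rho)}_{1,1}$ its expression after substituting $u_1 = u_{1,1} u_{1,2}$ and $u_2 = u_{1,2}-\rho$. Using the identities $\rho^3 = 1$ and $\rho^2 = \bar\rho$, a direct computation shows that the numerator of $W_{(u)}$ becomes divisible by $u_{1,2}$ exactly once, so that
\[
W^{(\rho)}_{1,1}(z) = \frac{3\bar\rho - 3\rho z u_{1,1} + u_{1,2}\,R(u_{1,1},u_{1,2},z)}{3 u_{1,1}^3 u_{1,2}^2}
\]
for an explicit polynomial $R$. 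On the exceptional curve $\{u_{1,2}=0\}$ the numerator reduces to $3\bar\rho(1-\bar\rho z u_{1,1})$, which vanishes precisely at the new base point $u_{1,1}=\rho/z$; at every other point of $L_1^{(\rho)}$ covered by the first chart, $W^{(\rho)}_{1,1}$ is therefore infinite. The second chart is treated identically and covers the remaining case $c=0$.

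Next I would compute the logarithmic derivative $(W^{(\rho)}_{1,1})'/W^{(\rho)}_{1,1}$ by substituting $u_1 = u_{1,1}u_{1,2}$, $u_2 = u_{1,2}-\rho$ directly into the rational expression for $W_{(u)}'/W_{(u)}$ given in the proof of Lemma~\ref{lemL}; this is legitimate because $W'/W$ is intrinsic to the flow and independent of the choice of coordinates. Both the numerator and denominator of that expression acquire a factor of $u_{1,2}$ under the substitution --- for the numerator one uses $u_2+u_2^4 = (u_{1,2}-\rho)[1+(u_{1,2}-\rho)^3]$ together with the fact that $1+(u_{1,2}-\rho)^3$ is divisible by $u_{1,2}$, and that the remaining terms already contain $u_1 = u_{1,1} u_{1,2}$ --- so after cancellation the result is regular along $\{u_{1,2}=0\}$ except where $1-\bar\rho z u_{1,1}$ vanishes, i.e.\ at the base point $u_{1,1}=\rho/z$. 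The main technical obstacle is exactly this verification: confirming that after substitution and simplification no unexpected poles along the exceptional curve remain. As in the rest of the paper, \textsc{Mathematica} is well suited for this symbolic check.

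With boundedness of the logarithmic derivative on some neighbourhood $U$ of $P$ established, the proof concludes precisely as in Lemma~\ref{lemL}: choose $z_0\in\Gamma$ with $W^{(\rho)}_{1,1}(z_0)\neq 0$ and $(u_{1,1}(z),u_{1,2}(z))\in U$ for all $z\in\Gamma_{z_0}$, bound $|(W^{(\rho)}_{1,1})'/W^{(\rho)}_{1,1}|$ on $U$ by some $M$, and integrate along $\Gamma_{z_0}$ to obtain
\[
|\log W^{(\rho)}_{1,1}(z_\ast)| \leq |\log W^{(\rho)}_{1,1}(z_0)| + M \int_{z_0}^{z_\ast}|dz| < \infty,
\]
which contradicts $W^{(\rho)}_{1,1}(z_\ast)=\infty$ whenever $c\neq\bar\rho z_\ast$. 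Hence $c = \bar\rho z_\ast$, as claimed.
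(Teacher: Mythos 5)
Your proposal is correct and follows essentially the same route as the paper: express the auxiliary function $W$ in the blow-up coordinates, observe that it is infinite on $L_1^{(\rho)}$ except at the base point $u_{1,1}=\rho/z$ (equivalently $c=\bar\rho z$) where it is indeterminate, verify that its logarithmic derivative is bounded near every other point of the exceptional curve, and conclude by the same integral estimate as in Lemma~\ref{lemL}. Your divisibility bookkeeping (the factor $u_{1,2}$ extracted from $1+(u_{1,2}-\rho)^3$ via $\rho^3=1$, $\rho^2=\bar\rho$, and the extra factor from the prefactor $u_1=u_{1,1}u_{1,2}$ in the numerator of $W'/W$) reproduces exactly the structure of the paper's explicit $P_1$ and its logarithmic derivative, so no gap remains.
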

\begin{proof}
The proof runs along the same lines as Lemma \ref{lemL}, by considering the auxiliary function $W$ re-written in the variables $(u_{1,1},u_{1,2})$, denoted by $W_1$, and in the variables $(u_{2,1},u_{2,2})$, denoted $W_2$. Again, we will only perform the analysis for $W_1$, the case for $W_2$ being similar. We have
\begin{equation*}
W_1(z) = u_{1,1}^{-3} u_{1,2}^{-2} \cdot P_1(z,u_{1,1},u_{1,2}),
\end{equation*}
where
\begin{equation*}
P_1(z,u_{1,1},u_{1,2}) = \bar{\rho} - z \rho  u_{1,1} - \rho  u_{1,2} + z u_{1,1} u_{1,2}+ \bar{\rho} \left(1 - \bar{\rho} \alpha  +  \rho \beta \right) u_{1,1}^2 u_{1,2} + \frac{1}{3} u_{1,2}^2+ (\alpha - 2 \rho) u_{1,1}^2 u_{1,2}^2 + u_{1,1}^2 u_{1,2}^3.
\end{equation*}
On the exceptional curve $(u_{1,1},u_{1,2}) = (c^{-1},0)$, $c \in \mathbb{C}$, $W_1$ is infinite apart from the point with $c = \bar{\rho} z$, where it is of the indeterminate form $\frac{0}{0}$. On the other hand, the logarithmic derivative of $W_1(z)$ is
\begin{equation*}
\begin{aligned}
\frac{W_1'}{W_1} = & P_1(z,u_{1,1},u_{1,2})^{-1} \cdot 3 u_{1,1} u_{1,2} \left(3 - 3 z \bar{\rho} u_{1,1} - 6 \bar{\rho} u_{1,2} + 6 z \rho  u_{1,1} u_{1,2} + (2 \rho \beta -\bar{\rho} \alpha)  u_{1,1}^2 u_{1,2} + 4 \rho  u_{1,2}^2 - 3 z u_{1,1} u_{1,2}^2 \right. \\ & \left. + 2 (\rho \alpha - \beta) u_{1,1}^2 u_{1,2}^2 - u_{1,2}^3 - \alpha  u_{1,1}^2 u_{1,2}^3 \right),
\end{aligned}
\end{equation*}
which is bounded in a neighbourhood of any point $(u_{1,1},u_{1,2}) = (c^{-1},0)$, $c \neq \bar{\rho} z$. By a similar integral estimate as in Lemma \ref{lemL}, we obtain a contradiction that $|W_1(z_\ast)| < \infty$. Hence we must have $c = \bar{\rho} z_\ast$. 
\end{proof}

We will now perform the second blow-up, with the computations carried out for the base point $(0,\bar{\rho} z)$ in the variables $(u_{2,1},u_{2,2})$. We introduce two new coordinate charts,
\begin{equation*}
\hat{u}_{1,1} = \frac{u_{2,1}}{\hat{u}_{2,2} - \bar{\rho}z}, \quad \hat{u}_{1,2} = u_{2,2} - \bar{\rho} z,
\end{equation*}
and
\begin{equation*}
\hat{u}_{2,1} = u_{2,1}, \quad \hat{u}_{2,2} = \frac{u_{2,2} - \bar{\rho} z}{u_{2,1}}.
\end{equation*}
In these coordinates the system of equations takes the following form,
\begin{equation*}
\begin{aligned}
\hat{u}_{1,1}' = & \frac{\bar{\rho} + (\bar{\rho} + \beta - \rho \alpha) \hat{u}_{1,1}}{\hat{u}_{1,2}} + \rho \hat{u}_{1,1} \hat{u}_{1,2} -(\rho z^2 + \alpha) \hat{u}_{1,1}^2 \hat{u}_{1,2} - 2 \bar{\rho} z \hat{u}_{1,1}^2 \hat{u}_{1,2}^2  - \hat{u}_{1,1}^2 \hat{u}_{1,2}^3 \\
\hat{u}_{1,2}' = & \rho \alpha - \beta - \bar{\rho} + z \hat{u}_{1,2} + \rho \hat{u}_{1,2}^2 - \frac{2\bar{\rho}}{\hat{u}_{1,1}} \\
\hat{u}_{2,1}' = & -\rho^2 + z \hat{u}_{2,1} - (\rho z^2 + \alpha) \hat{u}_{2,1}^2 + 2\rho \hat{u}_{2,1}^2 \hat{u}_{2,2} - 2z\bar{\rho} \hat{u}_{2,1}^3 \hat{u}_{2,2} - \hat{u}_{2,1}^4 \hat{u}_{2,2}^2 \\
\hat{u}_{2,2}' = & \frac{\rho \alpha - \beta - \bar{\rho} - \bar{\rho} \hat{u}_{2,2}}{\hat{u}_{2,1}} + (\rho z^2 + \alpha) \hat{u}_{2,1} \hat{u}_{2,2} - \rho \hat{u}_{2,1} \hat{u}_{2,2}^2 + 2 z \bar{\rho} \hat{u}_{2,1}^2 \hat{u}_{2,2}^2  + \hat{u}_{2,1}^3 \hat{u}_{2,2}^3.
\end{aligned}
\end{equation*}
Still, after the second blow-up the indeterminacy in the system of equations prevails, namely in the first chart at the coordinates $(\hat{u}_{1,1},\hat{u}_{1,2}) = \left((\bar{\rho} \alpha - \rho \beta - 1)^{-1},0\right)$ and at $(\hat{u}_{2,1},\hat{u}_{2,2}) = (0,\bar{\rho} \alpha -\rho \beta - 1)$ in the second coordinate chart, these representing the same point. The exceptional curves introduced by the second blow-ups will be denoted by $L_2^{(1)}$, $L_2^{(\omega)}$, $L_2^{(\bar{\omega})}$. Similar to Lemmas \ref{lemL} and \ref{lemL1}, the next Lemma shows that the exceptional curve cannot be reached by analytic continuation of a solution except at the base points.
\begin{lemma}
\label{lemL2}
Let $\Gamma$ be a rectifiable path in the complex plane with endpoint $z_\ast$ such that analytic continuation of a solution along $\Gamma$ leads to a point $P$ on one of the exceptional curves $L_2^{(\rho)}$, $\rho \in \{1,\omega,\bar{\omega}\}$. Let $P$ have coordinates $(\hat{u}_{1,1},\hat{u}_{1,2}) = (c^{-1},0)$ and $(\hat{u}_{2,1},\hat{u}_{2,2}) = (0,c)$. Then we must have $c= \bar{\rho} \alpha - \rho \beta -1$.
\end{lemma}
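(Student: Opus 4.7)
The plan is to mirror the strategy of Lemmas \ref{lemL} and \ref{lemL1}: pull back the auxiliary function $W(z)$ to the two coordinate charts produced by the second blow-up, and exploit the boundedness of its logarithmic derivative off the new base point to force a contradiction with the fact that $W$ is infinite on the rest of the exceptional curve.

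First, I would express $W$ in the coordinates $(\hat{u}_{1,1},\hat{u}_{1,2})$ and $(\hat{u}_{2,1},\hat{u}_{2,2})$, calling the resulting functions $\hat{W}_1$ and $\hat{W}_2$; as in the previous lemma it is enough to work with one chart, say the first. Composing the substitution $u_1 = u_{2,1}$, $u_2 = u_{2,1} u_{2,2} - \rho$ from the first blow-up with $u_{2,1} = \hat{u}_{1,1} \hat{u}_{1,2}$, $u_{2,2} = \hat{u}_{1,2} + \bar{\rho} z$ and inserting into $W_{(u)}$, I expect an expression of the form
\begin{equation*}
\hat{W}_1(z) = \hat{u}_{1,1}^{-a} \hat{u}_{1,2}^{-b} \cdot \hat{P}_1(z,\hat{u}_{1,1},\hat{u}_{1,2})
\end{equation*}
for positive integers $a,b$ and an explicit polynomial $\hat{P}_1$, to be produced with \textsc{Mathematica}. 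Restricting $\hat{P}_1$ to $\hat{u}_{1,2} = 0$ should yield a polynomial linear in $\hat{u}_{1,1}$ whose unique root is precisely $\hat{u}_{1,1} = (\bar{\rho}\alpha - \rho \beta - 1)^{-1}$; away from this root $\hat{P}_1$ does not vanish on the exceptional curve, so $\hat{W}_1$ is infinite at every other point of $L_2^{(\rho)}$, while at the distinguished base point it takes the indeterminate form $0/0$. An analogous analysis handles $\hat{W}_2$ in the second chart.

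Second, I would compute the logarithmic derivative $\hat{W}_1'/\hat{W}_1$ by using the differential equations for $\hat{u}_{1,1}$ and $\hat{u}_{1,2}$ displayed just before the statement, and verify that the resulting rational function carries sufficiently many positive powers of $\hat{u}_{1,2}$ in its numerator to cancel the pole of $\hat{W}_1$ along $L_2^{(\rho)}$, while its denominator is a nonzero multiple of $\hat{P}_1$. Consequently, in a neighbourhood $U$ of any point $P = (c^{-1},0)$ with $c \neq \bar{\rho}\alpha - \rho\beta - 1$, the logarithmic derivative is bounded by some constant $M$.

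Third, I would close the argument by the same integral estimate as in Lemma \ref{lemL1}: pick $z_0 \in \Gamma$ with $\hat{W}_1(z_0) \neq 0$ and $(\hat{u}_{1,1}(z),\hat{u}_{1,2}(z)) \in U$ for every $z$ on the tail $\Gamma_{z_0}$ of $\Gamma$, and integrate to obtain $|\log \hat{W}_1(z_\ast)| \leq |\log \hat{W}_1(z_0)| + M \cdot \mathrm{length}(\Gamma_{z_0}) < \infty$, contradicting the fact that $\hat{W}_1(z_\ast)$ would have to be infinite at $P$. The hard part is not conceptual but computational: at this stage the polynomials are noticeably bulkier than in Lemma \ref{lemL1}, and one must rely on a symbolic algebra package (or on a careful hand factorisation) both to check that $\hat{P}_1|_{\hat{u}_{1,2}=0}$ has the claimed linear form and to confirm that the logarithmic derivative carries enough factors of $\hat{u}_{1,2}$ for the boundedness step to go through.
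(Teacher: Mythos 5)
Your proposal follows the paper's proof essentially verbatim: the paper likewise writes $\hat{W}_1 = \hat{u}_{1,1}^{-2}\hat{u}_{1,2}^{-1}\hat{P}_1$ with $\hat{P}_1|_{\hat{u}_{1,2}=0} = \bar{\rho} + (\beta - \rho\alpha + \bar{\rho})\hat{u}_{1,1}$ vanishing only at the base point, observes that the logarithmic derivative is $\hat{P}_1^{-1}$ times a polynomial divisible by $\hat{u}_{1,1}\hat{u}_{1,2}$ and hence bounded near any non-base point of $L_2^{(\rho)}$, and concludes by the same integral estimate as in Lemmas \ref{lemL} and \ref{lemL1}. The approach, including the reliance on symbolic computation for the bulky polynomials, matches the paper's.
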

\begin{proof}
Again we consider the auxiliary function $W$, re-written in the variables $(\hat{u}_{1,1},\hat{u}_{1,2})$ and $(\hat{u}_{2,1},\hat{u}_{2,2})$, denoted $\hat{W}_1$ and $\hat{W}_2$, respectively. Again we only consider the case $\hat{W}_1$,
\begin{equation*}
\hat{W}_1 = \hat{u}_{1,1}^{-2} \hat{u}_{1,2}^{-1}  \cdot \hat{P}_1(z,\hat{u}_{1,1},\hat{u}_{1,2}),
\end{equation*}
where
\begin{equation*}
\begin{aligned}
\hat{P}_1 = & \Big( \bar{\rho}+ (\beta - \rho \alpha + \bar{\rho}) \hat{u}_{1,1} - z \hat{u}_{1,1} \hat{u}_{1,2} + \left( (\bar{\rho} \alpha - 2) z + z^3/3 \right) \hat{u}_{1,1}^2 \hat{u}_{1,2} - \rho \hat{u}_{1,1} \hat{u}_{1,2}^2+ ( \alpha - 2\rho + \rho z^2)   \hat{u}_{1,1}^2 \hat{u}_{1,2}^2  \\ & + z^2 \rho  \hat{u}_{1,1}^3 \hat{u}_{1,2}^2+ z \bar{\rho} \hat{u}_{1,1}^2 \hat{u}_{1,2}^3+2 z \bar{\rho} \hat{u}_{1,1}^3 \hat{u}_{1,2}^3+ \frac{1}{3} \hat{u}_{1,1}^2 \hat{u}_{1,2}^4+ \hat{u}_{1,1}^3 \hat{u}_{1,2}^4 \Big).
\end{aligned}
\end{equation*}
We note that on the exceptional curve $(\hat{u}_{1,1},\hat{u}_{1,2}) = (c^{-1},0)$, $c \in \mathbb{C}$, $\hat{W}_1$ is infinite, apart from at the base point $c = \bar{\rho} \alpha - \rho \beta -1$, where it is of the indeterminate form $\frac{0}{0}$. The logarithmic derivative of $\hat{W}_1$ is given by
\begin{equation*}
\begin{aligned}
\frac{\hat{W}_1'}{\hat{W}_1} = & \hat{P}_1(z,\hat{u}_{1,1},\hat{u}_{1,2})^{-1} \cdot \hat{u}_{1,1} \hat{u}_{1,2} \left(3 + (2 \rho \beta -  \bar{\rho} \alpha) \hat{u}_{1,1} - 6 z \rho  \hat{u}_{1,1} \hat{u}_{1,2} - ( 2z(\bar{\rho} \beta - \alpha) - \rho z^3) \hat{u}_{1,1}^2 \hat{u}_{1,2} - 6 \bar{\rho} \hat{u}_{1,1} \hat{u}_{1,2}^2 \right. \\ & \left. + (2 \rho \alpha - 2 \beta + 6 \bar{\rho} z^2) \hat{u}_{1,1}^2 \hat{u}_{1,2}^2 - ( \rho \alpha z^2 + \bar{\rho} z^4) \hat{u}_{1,1}^3 \hat{u}_{1,2}^2 + 9 z \hat{u}_{1,1}^2 \hat{u}_{1,2}^3 - (2 \bar{\rho} \alpha z + 4z^3) \hat{u}_{1,1}^3 \hat{u}_{1,2}^3  + 4 \rho  \hat{u}_{1,1}^2 \hat{u}_{1,2}^4 \right. \\ & \left. - (\alpha + 6z^2)  \hat{u}_{1,1}^3 \hat{u}_{1,2}^4 - 4 z \bar{\rho} \hat{u}_{1,1}^3 \hat{u}_{1,2}^5 - \hat{u}_{1,1}^3 \hat{u}_{1,2}^6\right).
\end{aligned}
\end{equation*}
Again, in a neighbourhood of any point on the exceptional curve other than the base point, the logartihmic derivative of $\hat{W}_1$ is bounded. By an integral estimate similar to the one in Lemmas \ref{lemL} and \ref{lemL1} it follows, by analytic continuation along $\Gamma$, that $|\hat{W}_1(z_\ast)| < \infty$, in contradiction to the fact that $\hat{W}_1$ is infinite there. Hence the solution must run into the base point $(u_{1,1},u_{1,2}) = (( \bar{\rho} \alpha - \rho \beta -1)^{-1},0)$.
\end{proof}
We will now show that one further blow-up for each base point will resolve the indeterminacy in the system of equations so that one obtains a regular initial value problem. We will perform the blow-up in the variables $(\hat{u}_{2,1},\hat{u}_{2,2})$. For this we again introduce two new coordinate charts,
\begin{equation*}
\tilde{u}_{1,1} = \frac{\hat{u}_{2,1}}{\hat{u}_{2,2} + 1 - \bar{\rho} \alpha + \rho \beta}, \quad \tilde{u}_{1,2} = \hat{u}_{2,2} + 1 - \bar{\rho} \alpha + \rho \beta,
\end{equation*}
and
\begin{equation*}
\tilde{u}_{2,1} = \hat{u}_{2,1}, \quad \tilde{u}_{2,2} = \frac{\hat{u}_{2,2} + 1 - \bar{\rho} \alpha + \rho \beta}{\hat{u}_{2,1}}.
\end{equation*}
The equations after the third blow-up read, in the variables $(\tilde{u}_{1,1},\tilde{u}_{1,2})$,
\begin{equation*}
\begin{aligned}
\tilde{u}_{1,1}' = & z \tilde{u}_{1,1}+ \rho  \left(1 + z^2 + \beta \rho \right) \left(1 - \bar{\rho} \alpha + \rho \beta \right) \tilde{u}_{1,1}^2 + 2\left( \alpha - 2 \rho - z^2 \rho - 2 \beta  \bar{\rho}\right) \tilde{u}_{1,1}^2 \tilde{u}_{1,2}+3 \rho \tilde{u}_{1,1}^2 \tilde{u}_{1,2}^2 \\ & -2z \bar{\rho} (1- \bar{\rho} \alpha + \rho \beta)^2 \tilde{u}_{1,1}^3 \tilde{u}_{1,2}+ 6z \bar{\rho} \left(1 - \bar{\rho} \alpha + \rho \beta \right) \tilde{u}_{1,1}^3 \tilde{u}_{1,2}^2-4 z \bar{\rho} \tilde{u}_{1,1}^3 \tilde{u}_{1,2}^3 +(1-\bar{\rho} \alpha + \rho \beta)^3 \tilde{u}_{1,1}^4 \tilde{u}_{1,2}^2 \\ & -4 (1- \bar{\rho} \alpha + \rho \beta)^2 \tilde{u}_{1,1}^4 \tilde{u}_{1,2}^3+ 5\left(1 - \bar{\rho} \alpha + \rho \beta \right) \tilde{u}_{1,1}^4 \tilde{u}_{1,2}^4-2 \tilde{u}_{1,1}^4 \tilde{u}_{1,2}^5 \\
\tilde{u}_{1,2}' = & -\frac{\bar{\rho}}{\tilde{u}_{1,1}} -\rho \left(1 + z^2+ \rho \beta \right) \left( 1 - \bar{\rho} \alpha + \rho \beta \right) \tilde{u}_{1,1} \tilde{u}_{1,2}+\left(-\alpha +2 \rho +z^2 \rho +2 \beta  \bar{\rho}\right) \tilde{u}_{1,1} \tilde{u}_{1,2}^2-\rho  \tilde{u}_{1,1} \tilde{u}_{1,2}^3 \\ & + 2z \bar{\rho} (1 - \bar{\rho} \alpha + \rho \beta)^2 \tilde{u}_{1,1}^2 \tilde{u}_{1,2}^2 -4z\bar{\rho} \left(1 - \bar{\rho} \alpha + \rho \beta \right) \tilde{u}_{1,1}^2 \tilde{u}_{1,2}^3+2 z \bar{\rho} \tilde{u}_{1,1}^2 \tilde{u}_{1,2}^4 -(1-\bar{\rho} \alpha + \rho \beta)^3 \tilde{u}_{1,1}^3 \tilde{u}_{1,2}^3 \\ & + 3(1- \bar{\rho} \alpha + \rho \beta)^2 \tilde{u}_{1,1}^3 \tilde{u}_{1,2}^4 -3 \left(1 -\bar{\rho} \alpha + \rho \beta \right) \tilde{u}_{1,1}^3 \tilde{u}_{1,2}^5+ \tilde{u}_{1,1}^3 \tilde{u}_{1,2}^6,
\end{aligned}
\end{equation*}
and in the variables $(\tilde{u}_{2,1},\tilde{u}_{2,2})$,
\begin{equation}
\label{finalsys}
\begin{aligned}
\tilde{u}_{2,1}' = & -\bar{\rho}+z \tilde{u}_{2,1}+\left(\alpha -2 \rho -z^2 \rho -2 \beta  \bar{\rho}\right) \tilde{u}_{2,1}^2+\left(1 - \bar{\rho} \alpha + \rho \beta \right) \tilde{u}_{2,1}^5 \tilde{u}_{2,2}-\tilde{u}_{2,1}^6 \tilde{u}_{2,2}^2+ 2z \bar{\rho} \left(1- \bar{\rho} \alpha + \rho \beta \right) \tilde{u}_{2,1}^3 \\ & + 2 \rho \tilde{u}_{2,1}^3 \tilde{u}_{2,2} -(1-\bar{\rho} \alpha + \rho \beta)^2 \tilde{u}_{2,1}^4 - 2 z \bar{\rho} \tilde{u}_{2,1}^4 \tilde{u}_{2,2} \\
\tilde{u}_{2,2}' = & -\rho (1 + z^2 + \rho \beta)(1-\bar{\rho}\alpha+\rho \beta) -z \tilde{u}_{2,2} -5 \left(1 - \bar{\rho} \alpha + \rho \beta \right) \tilde{u}_{2,1}^4 \tilde{u}_{2,2}^2+2 \tilde{u}_{2,1}^5 \tilde{u}_{2,2}^3 + 2z \bar{\rho} (1 - \bar{\rho} \alpha + \rho \beta)^2 \tilde{u}_{2,1} \\ & + \left(-2 \alpha +4 \rho +2 z^2 \rho +4 \beta  \rho ^2\right) \tilde{u}_{2,1} \tilde{u}_{2,2} - (1-\bar{\rho} \alpha + \rho \beta)^3 \tilde{u}_{2,1}^2 - 6z \bar{\rho} (1 -\bar{\rho} \alpha + \rho \beta) \tilde{u}_{2,1}^2 \tilde{u}_{2,2} -3 \rho \tilde{u}_{2,1}^2 \tilde{u}_{2,2}^2 \\ & +4(1-\bar{\rho} \alpha + \rho \beta)^2 \tilde{u}_{2,1}^3 \tilde{u}_{2,2}+4z \bar{\rho} \tilde{u}_{2,1}^3 \tilde{u}_{2,2}^2.
\end{aligned}
\end{equation}
From these equation we see that on the exceptional curves $L_3^{(1)}$, $L_3^{(\omega)}$ and $L_3^{(\bar{\omega})}$, introduced by the third blow-ups of each base point, the system of equations becomes a regular initial value problem in the variables $(\tilde{u}_{2,1},\tilde{u}_{2,2})$. If we denote by $\mathcal{S}$ the compact space obtained by the three blow-ups at each base point, covered by all the coordinate systems introduced in the process, the system describes a regular intial value problem on the space
\begin{equation*}
\mathcal{I} = \mathcal{S} \setminus \left( L \cup L_1^{(1)} \cup L_1^{(\omega)} \cup L_1^{(\bar{\omega})} \cup L_2^{(1)} \cup L_2^{(\omega)} \cup L_2^{(\bar{\omega})} \right),
\end{equation*} 
this is the space of initial conditions. The changes of variables introduced by the three blow-ups amount to the following relationship to the original variables $(p,q)$,
\begin{equation*}
\tilde{u}_{1,1}(z) = \frac{1}{q(z) r(z)}, \quad \tilde{u}_{1,2}(z) = r(z), \quad \tilde{u}_{2,1}(z) = \frac{1}{q(z)}, \quad \tilde{u}_{2,2}(z) = q(z) r(z),
\end{equation*}
where $r(z) = 1 - \bar{\rho} \alpha + \rho \beta - \bar{\rho} z q(z) + \rho q(z)^2 + q(z) p(z)$. These bi-rational relations can be inverted easily to yield
\begin{equation*}
\begin{aligned}
q(z) &= \frac{1}{\tilde{u}_{2,1}(z)} \\
p(z) &= -\frac{\rho}{\tilde{u}_{2,1}(z)} + \bar{\rho} z - (1-\bar{\rho} \alpha + \rho \beta ) \tilde{u}_{2,1} + \tilde{u}_{2,1}^2 \tilde{u}_{2,2}.
\end{aligned}
\end{equation*}
We can seek local analytic solutions of the final system (\ref{finalsys}), with initial conditions $(\tilde{u}_{2,1}(z_\ast),\tilde{u}_{2,2}(z_\ast)) = (0,c)$ on the exceptional curve, in the form of power series
\begin{equation*}
\tilde{u}_{2,1}(z) = \sum_{n=1}^\infty a_n(z-z_\ast)^n, \quad \tilde{u}_{2,2}(z) = c + \sum_{n=1}^\infty b_n(z-z_\ast)^n,
\end{equation*}
where all coefficients $a_n$, $b_n$, $n=1,2,3,\dots$, can be obtained recursively. One finds
\begin{equation*}
a_1 = -\bar{\rho}, \quad a_2 = -\frac{z_\ast \bar{\rho}}{2}, \quad a_3 = \frac{\rho \alpha - 2\beta}{3} - \bar{\rho} \left( 1 + \frac{z_\ast^2}{2} \right), \quad a_4 = -\frac{c \rho}{2} + \left(\frac{5\alpha \rho}{6} - \frac{7\beta}{6} - \frac{15\bar{\rho}}{8} \right) z_*-\frac{3}{8} \bar{\rho } z_*^3, \quad \cdots 
\end{equation*}
\begin{equation*}
\begin{aligned}
b_1 &= \alpha - \beta^2 - \rho + \alpha \beta \rho - 2 \beta \bar{\rho} - c z_\ast + (\alpha - \bar{\rho} \beta - \rho) z_\ast^2, \\ b_2 &= c \left(-\frac{5}{2} -2 \beta \rho + \alpha  \bar{\rho} \right) +\frac{1}{2} \left(5 \alpha -\beta ^2-3 \rho +3 \alpha  \beta  \rho -2 \alpha ^2 \bar{\rho }-4 \beta  \bar{\rho }\right) z_\ast-\frac{c z_\ast^2}{2}, \quad \cdots
\end{aligned}
\end{equation*}
Thus a solution in the variables $(\tilde{u}_{2,1},\tilde{u}_{2,2})$, locally analytic in the neighbourhood of a point on one of the exceptional curves $L_3^{(\rho)}$, $\rho \in\{1,\omega,\bar{\omega}\}$, where $\tilde{u}_{2,1}=0$, becomes a simple pole in the original variables $(p,q)$, corresponding to the expansions (\ref{poleexpansion}). The parameters $h$ and $k$ in the expansions (\ref{poleexpansion}) are determined in terms of $\tilde{u}_{2,2}(z_\ast)=c$, the position of the initial point on the exceptional curve $L_3^{(\rho)}$, by the expressions
\begin{equation}
\label{hkrelation}
h = \frac{c}{2} + \left(-\frac{\alpha}{2} + \frac{7 \rho}{8} + \frac{\beta \bar{\rho}}{2} \right) z_\ast, \quad k = \frac{c \rho}{2} - \frac{3 \bar{\rho} z_\ast}{8}.
\end{equation}

Although the space $\mathcal{I}$ itself is not compact, Lemmas \ref{lemL}, \ref{lemL1} and \ref{lemL2} show that a solution, when analytically continued along some path in the complex plane cannot pass through the line at infinity $L$ or any of the exceptional curves $L_1^{(\rho)}$, $L_2^{(\rho)}$, $\rho \in \{1,\omega,\bar{\omega}\}$. We have thus shown the following theorem by which we conclude this article.
\begin{thm}
Let $(p(z),q(z))$ be a local analytic solution of the system (\ref{hamsys}) in a neighbourhood of a point $z_0 \in \mathbb{C}$. Let $\Gamma$ be a rectifiable path from $z_0$ to some point $z_\ast$ such that $(p(z),q(z))$ can be analytically continued along $\Gamma$ up to, but not including the point $z_\ast$. Then, in some coordinate chart of $\mathcal{I}$, the solution, re-written in these coordinates, can be analytically continued to $z_\ast$ leading to a point $P \in L_3^{(\rho)}$, $\rho \in \{1,\omega,\bar{\omega} \}$, not covered by the original coordinate chart $(p,q)$. The local analytic solution about $P$ corresponds to a simple pole of the form (\ref{poleexpansion}) in the variables $(p,q)$ with the parameters $h$ and $k$ fixed by the location of $P$ via the expressions (\ref{hkrelation}).
\end{thm}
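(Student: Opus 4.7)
The plan is to combine the three lemmas already proved with a compactness argument on $\mathcal{S}$. Since $\mathcal{S}$ is compact, for any sequence $z_n \in \Gamma$ with $z_n \to z_\ast$ the corresponding trajectory points in $\mathcal{S}$ (read off in whatever coordinate chart covers them) admit a subsequence converging to some $P \in \mathcal{S}$. The goal is to show that $P$ is forced to lie on one of the exceptional curves $L_3^{(\rho)}$ and that this subsequential limit is in fact the genuine limit of the analytic continuation.

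First I would run a cascade using Lemmas \ref{lemL}, \ref{lemL1}, \ref{lemL2}. The hypothesis that $(p,q)$ does not continue analytically to $z_\ast$ forces $P \notin \mathbb{C}^2$, so $P \in L$; by Lemma \ref{lemL} it must be one of the three base points, say the one labelled by $\rho$. Switching to the blow-up chart centred at that base point, the same argument applied with Lemma \ref{lemL1} places $P$ on $L_1^{(\rho)}$ only at $(u_{2,1},u_{2,2})=(0,\bar{\rho}z_\ast)$; one more application with Lemma \ref{lemL2} places $P$ on $L_2^{(\rho)}$ only at $(0,\bar{\rho}\alpha-\rho\beta-1)$. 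After the third blow-up no further base points occur, so when $P$ is transported to the chart $(\tilde{u}_{2,1},\tilde{u}_{2,2})$ it ends up at some point $(0,c)$ with $c\in\mathbb{C}$ on the exceptional curve $L_3^{(\rho)}$.

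Next I would invoke Cauchy's local existence and uniqueness theorem for the system (\ref{finalsys}), which is analytic in a full neighbourhood of $(0,c)$. This yields a unique power series solution starting at $z_\ast$, whose coefficients $a_n,b_n$ are given recursively by the equations as displayed in the excerpt. Because this solution is unique and the blow-up projections are biholomorphisms off the exceptional divisor, the analytic continuation of $(p(z),q(z))$ along $\Gamma$, re-expressed in $(\tilde{u}_{2,1},\tilde{u}_{2,2})$, must coincide with this power series on $\Gamma_{z_0}$ for $z_0$ close enough to $z_\ast$. This upgrades subsequential convergence to genuine convergence and extends the solution analytically to $z_\ast$ in the chart $(\tilde{u}_{2,1},\tilde{u}_{2,2})$.

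Finally, substituting the power series for $\tilde{u}_{2,1}$ and $\tilde{u}_{2,2}$ into the bi-rational inverse $q=1/\tilde{u}_{2,1}$ and $p=-\rho/\tilde{u}_{2,1}+\bar{\rho}z-(1-\bar{\rho}\alpha+\rho\beta)\tilde{u}_{2,1}+\tilde{u}_{2,1}^{2}\tilde{u}_{2,2}$ and expanding as a Laurent series at $z_\ast$ recovers the form (\ref{poleexpansion}); matching the $(z-z_\ast)^2$ coefficients in $q$ and $p$ to $h$ and $k$ using the explicit $a_4$ and the corresponding $b$-coefficient listed in the excerpt gives (\ref{hkrelation}). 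The main obstacle is the bookkeeping in the cascade: one has to verify at each level that the coordinate charts introduced by the blow-up cover a full neighbourhood of the relevant exceptional curve and that the subsequential limit is actually preserved as one passes from the $n$-th chart to the $(n+1)$-th, so that the applicable lemma pins $P$ down to the next base point rather than allowing it to escape along the exceptional curve.
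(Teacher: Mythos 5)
Your proposal is correct and follows essentially the same route as the paper, which deduces the theorem directly from Lemmas \ref{lemL}--\ref{lemL2} (the cascade excluding $L$, $L_1^{(\rho)}$, $L_2^{(\rho)}$ away from the base points), the regularity of the final system (\ref{finalsys}) on $L_3^{(\rho)}$, and the explicit power-series and bi-rational inversion computations yielding (\ref{poleexpansion}) and (\ref{hkrelation}). The compactness/subsequential-limit bookkeeping you flag is left implicit in the paper, so your sketch is, if anything, slightly more explicit on that point.
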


\bibliographystyle{plain}

\noindent
Thomas Kecker \\
Department of Mathematics \\
University of Portsmouth \\
Lion Gate Building, Lion Terrace \\
Portsmouth, PO1 3HF \\
United Kingdom \\
thomas.kecker@port.ac.uk

\end{document}